\newtheorem{theorem}[equation]{Theorem}
\newtheorem{lemma}[equation]{Lemma}
\newtheorem{corollary}[equation]{Corollary}
\newtheorem{proposition}[equation]{Proposition}
\numberwithin{equation}{section}
\begin{document}

\title[$A$-hypergeometric series associated to a lattice polytope]{$A$-hypergeometric series
 associated to a lattice polytope with a unique interior lattice point}
\author{Alan Adolphson}
\address{Department of Mathematics\\
Oklahoma State University\\
Stillwater, Oklahoma 74078}
\email{adolphs@math.okstate.edu}
\author{Steven Sperber}
\address{School of Mathematics\\
University of Minnesota\\
Minneapolis, Minnesota 55455}
\email{sperber@math.umn.edu}
\date{\today}
\keywords{}
\subjclass{}
\begin{abstract}
We associate to lattice points ${\bf a}_0,{\bf a}_1,\dots,{\bf a}_N$ in ${\mathbb Z}^n$ an
 $A$-hyper\-geometric series $\Phi(\lambda_0,\dots,\lambda_N)$ with integer coefficients.  If
 ${\bf a}_0$ is the unique interior lattice point of the convex hull of ${\bf a}_1,\dots,{\bf a}_N$,
 then for every prime $p\neq 2$ the ratio $\Phi(\lambda)/\Phi(\lambda^p)$ has a $p$-adic analytic
 continuation to a closed unit polydisk minus a neighborhood of a hypersurface.  
\end{abstract}
\maketitle

\section{Introduction}

Let ${\bf a}_0,{\bf a}_1,\dots,{\bf a}_N\in{\mathbb Z}^n$ and put ${\bf a}_j = (a_{j1},\dots,a_{jn})$.  
For each $j=0,\dots,N$, let $\hat{\bf a}_j = (1,{\bf a}_j)\in{\mathbb Z}^{n+1}$ and put 
$A=\{\hat{\bf a}_j\}_{j=0}^N$.  We let $x_0,\dots,x_n$ be the coordinates on~${\mathbb R}^{n+1}$, so 
that $\hat{\bf a}_{j0} = 1$ while $\hat{\bf a}_{jk} = {\bf a}_{jk}$ for $k=1,\dots,n$.  Let $L$ be the 
module of relations among the $\hat{\bf a}_j$:
\[ L=\bigg\{l=(l_0,\dots,l_N)\in{\mathbb Z}^{N+1}\mid \sum_{j=0}^N l_j\hat{\bf a}_j = {\bf 0}\bigg\}. \]
We consider the $A$-hypergeometric system with parameter $-\hat{\bf a}_0$.  This is the system of 
partial differential equations in variables $\lambda_0,\dots,\lambda_N$ consisting of the operators
\[ \Box_l = \prod_{l_i>0} \bigg(\frac{\partial}{\partial \lambda_i}\bigg)^{l_i} - \prod_{l_i<0} 
\bigg(\frac{\partial}{\partial \lambda_i}\bigg)^{-l_i} \]
for $l\in L$ and the operators
\[ Z_i = \begin{cases} \sum_{j=0}^N a_{ji}\lambda_j\frac{\partial}{\partial \lambda_j} + a_{0i} & 
\text{for $i=1,\dots,n$,} \\
\sum_{j=0}^N \lambda_j\frac{\partial}{\partial \lambda_j} + 1 & \text{for $i=0$.} \end{cases} \]

Let $L_+ = \{ l\in L\mid l_i\geq 0\;\text{for $i=1,\dots,N$}\}.$  Taking $v=(-1,0,\dots,0)$ in 
\cite[Eq.~(3.36)]{SST} and applying \cite[Proposition 3.4.13]{SST} shows that this system has a 
formal solution $\lambda_0^{-1}\Phi(\lambda)$, where
\begin{equation}
\Phi(\lambda) = \sum_{l\in L_+} \frac{(-1)^{-l_0}(-l_0)!}{l_1!\cdots l_N!}\prod_{i=0}^N \lambda_i^{l_i}.
\end{equation}
Since $\sum_{i=0}^N l_i=0$ for $l\in L$, we can rewrite this as 
\begin{equation}
\Phi(\lambda) = \sum_{l\in L_+} \frac{(-1)^{\sum_{i=1}^N l_i} (l_1+\cdots+l_N)!}{l_1!\cdots l_N!} 
\prod_{i=1}^N \bigg(\frac{\lambda_i}{\lambda_0}\bigg)^{l_i},
\end{equation}
which shows that the coefficients of this series lie in ${\mathbb Z}$.  This implies that for each 
prime number $p$, the series $\Phi(\lambda)$ converges $p$-adically on the set 
\[ {\mathcal D} = \{(\lambda_0,\dots,\lambda_N)\in\Omega^{N+1}\mid  |\lambda_i/\lambda_0|<1 
\text{ for }i=1,\dots,N\}, \]
 (where $\Omega = $ completion of an algebraic closure of ${\mathbb Q}_p$) and $\Phi(\lambda)$ takes 
unit values there.  In particular, the ratio $\Phi(\lambda)/\Phi(\lambda^p)$ is an analytic function 
on ${\mathcal D}$ and takes unit values there.  

{\bf Remark:} Rewrite the series $\Phi(\lambda)$ according to powers of $-\lambda_0$:
\begin{equation}
\Phi(\lambda) = \sum_{k=0}^\infty \bigg(\sum_{\substack{l_1,\dots,l_N\in{\mathbb Z}_{\geq 0}\\ 
l_1\hat{\bf a}_1+\cdots+l_N\hat{\bf a}_N = k\hat{\bf a}_0}} \binom{k}{l_1,\dots,l_N}\lambda_1^{l_1}\cdots
\lambda_N^{l_N}\bigg)(-\lambda_0)^{-k}. 
\end{equation}
It is easy to see that the coefficient of $(-\lambda_0)^{-k}$ in this expression is the coefficient 
of $x^{k{\bf a}_0}$ in the Laurent polynomial $\big(\sum_{i=1}^N \lambda_ix^{{\bf a}_i}\big)^k$.  In 
particular, if ${\bf a}_0 = 0$, then it is the constant term of this Laurent polynomial.  The series 
$\Phi(\lambda)$ may thus be specialized to the hypergeometric series considered in Samol and van 
Straten\cite{SvS}.

Define a truncation of $\Phi(\lambda)$ by 
\[ \Phi_1(\lambda) = \sum_{\substack{l\in L_+\\ l_1+\cdots+l_N\leq p-1}} \frac{(-1)^{\sum_{i=1}^N l_i} 
(l_1+\cdots+l_N)!}{l_1!\cdots l_N!} \prod_{i=1}^N \bigg(\frac{\lambda_i}{\lambda_0}\bigg)^{l_i} \]
and let
\[ {\mathcal D}_+ = \{(\lambda_0,\dots,\lambda_N)\mid |\lambda_i/\lambda_0|\leq 1\text{ for } i=1,
\dots,N \text{ and }
|\Phi_1(\lambda)|= 1\}. \]
Note that ${\mathcal D}_+$ properly contains ${\mathcal D}$.
Let~$\Delta$ be the convex hull of the set $\{{\bf a}_1,\dots,{\bf a}_N\}$.  Our main result is the 
following theorem.

\begin{theorem}
Suppose that ${\bf a}_0$ is the unique interior lattice point of $\Delta$.  Then for every prime 
number $p\neq 2$ the ratio $\Phi(\lambda)/\Phi(\lambda^p)$ extends to an analytic function 
on~${\mathcal D}_+$.  
\end{theorem}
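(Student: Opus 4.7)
The plan is to adapt Dwork's method to the $A$-hypergeometric setting via truncation and a congruence. I would introduce the family
\[
\Phi_s(\lambda) = \sum_{\substack{l\in L_+\\ l_1+\cdots+l_N\le p^s-1}} \frac{(-1)^{\sum_{i=1}^N l_i}(l_1+\cdots+l_N)!}{l_1!\cdots l_N!}\prod_{i=1}^N \Big(\frac{\lambda_i}{\lambda_0}\Big)^{l_i},
\]
so that $\Phi_0=1$, $\Phi_1$ is the series already appearing in the definition of $\mathcal{D}_+$, and $\Phi_s\to\Phi$ in the $p$-adic topology on $\mathbb{Z}[[\lambda_1/\lambda_0,\dots,\lambda_N/\lambda_0]]$. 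The central claim to prove would be a Dwork-type congruence
\[
\Phi_{s+1}(\lambda)\,\Phi_s(\lambda^p)\equiv\Phi_s(\lambda)\,\Phi_{s+1}(\lambda^p)\pmod{p^{s+1}}
\]
for every $s\ge 0$.

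Granting this, the conclusion follows by the standard limit argument. I would show by induction on $s$ that $\Phi_s$ is a $p$-adic unit on $\mathcal{D}_+$ (the base case $s=1$ being built into the definition of $\mathcal{D}_+$), so each ratio $R_s(\lambda):=\Phi_s(\lambda)/\Phi_s(\lambda^p)$ is a well-defined analytic unit there. The congruence then forces $R_{s+1}\equiv R_s\pmod{p^{s+1}}$ uniformly on $\mathcal{D}_+$, so the sequence converges to an analytic function, which on the smaller region $\mathcal{D}$ must coincide with $\Phi(\lambda)/\Phi(\lambda^p)$ by construction. This gives the required analytic extension.

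The main obstacle is establishing the congruence, and this is where the hypothesis that $\mathbf{a}_0$ is the unique interior lattice point of $\Delta$ enters. For $l\in L_+$ with $k:=l_1+\cdots+l_N$, the relation $\sum_j l_j\hat{\mathbf{a}}_j=\mathbf{0}$ reads $k\mathbf{a}_0=l_1\mathbf{a}_1+\cdots+l_N\mathbf{a}_N$, so each term of $\Phi$ records an expression of an integer multiple of $\mathbf{a}_0$ as a nonnegative integer combination of $\mathbf{a}_1,\dots,\mathbf{a}_N$. I would split each index $l$ of weight $p^s\le k\le p^{s+1}-1$ as $l=pl'+l''$, with $l''\in L_+$ of total weight in $\{0,\dots,p^s-1\}$ (contributing to $\Phi_s(\lambda)$) and $l'\in L_+$ of weight $\le p^s$ (contributing to $\Phi_s$ evaluated at $\lambda^p$). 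The interior hypothesis is precisely what makes such a decomposition into $L_+$-elements always possible: because $\mathbf{a}_0$ lies strictly inside $\Delta$, every partial multiple $r\mathbf{a}_0$ with $0\le r\le p-1$ admits an expression as a nonnegative integer combination of the $\mathbf{a}_i$ of total weight $r$, so the residue of $l$ modulo $p$ is itself an admissible $L_+$-index. A Kummer-type estimate for the $p$-adic valuation of the multinomial coefficient $k!/(l_1!\cdots l_N!)$ should then pair off the cross terms on the two sides of the congruence modulo $p^{s+1}$. The exclusion of $p=2$ is expected to enter through the sign factor $(-1)^{\sum l_i}$, or through a Wolstenholme-style refinement needed to extract the final power of $p$; both mechanisms degenerate at the prime $2$.
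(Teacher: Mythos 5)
Your overall scheme (truncations, a Dwork-type congruence, then a limit of unit ratios on ${\mathcal D}_+$) is a genuinely different route from the paper, which instead runs a contraction-mapping argument on a space of formal series and exhibits an explicit Frobenius eigenvector whose $x^{-\hat{\bf a}_0}$-coefficient is $\Phi(\lambda)$. But as it stands the proposal has a genuine gap: the entire weight of the proof rests on the congruence $\Phi_{s+1}(\lambda)\Phi_s(\lambda^p)\equiv\Phi_s(\lambda)\Phi_{s+1}(\lambda^p)\pmod{p^{s+1}}$, which is neither proved nor, in the form you state it, true. Take $n=1$, $N=2$, ${\bf a}_1=0$, ${\bf a}_2=2$, ${\bf a}_0=1$ (so ${\bf a}_0$ is the unique interior lattice point of $[0,2]$); then $L_+=\{(-2m,m,m)\}$ and $\Phi=\sum_m\binom{2m}{m}t^m$ with $t=\lambda_1\lambda_2/\lambda_0^2$, and for $p=3$ one has $\Phi_1=1+2t$, $\Phi_2=1+2t+6t^2+20t^3+70t^4$. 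At $s=0$ your congruence reads $1+2t\equiv 1+2t^3\pmod 3$, which is false (and false even as functions on ${\mathcal D}_+$: at $\lambda_0=\lambda_1=1$, $\lambda_2=\zeta$ a primitive fourth root of unity, $|1+2\zeta|=1$ but $|2\zeta-2\zeta^3|=1$); at $s=1$ the coefficient of $t^2$ gives $6\not\equiv 0\pmod 9$. The true congruence in this circle of ideas is asymmetric, of the shape $\Phi(\lambda)/\Phi(\lambda^p)\equiv\Phi_{s+1}(\lambda)/\Phi_s(\lambda^p)\pmod{p^{s+1}}$ (equivalently $\Phi_{s+1}(\lambda)\Phi_{s-1}(\lambda^p)\equiv\Phi_s(\lambda)\Phi_s(\lambda^p)\pmod{p^s}$), so one must pass to the limit of $\Phi_{s+1}(\lambda)/\Phi_s(\lambda^p)$, not of $\Phi_s(\lambda)/\Phi_s(\lambda^p)$. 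Proving that congruence is precisely the hard content (it is the main theorem of Mellit--Vlasenko in the constant-term setting), and it does not follow from a Kummer-type valuation estimate on multinomial coefficients together with a splitting $l=pl'+l''$.

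Moreover, the mechanism you propose for where the hypothesis enters is incorrect: uniqueness of the interior lattice point does not imply that every multiple $r{\bf a}_0$ with $0\le r\le p-1$ is a nonnegative integer combination of the ${\bf a}_i$ of total weight $r$. In the paper's own Dwork-family example (${\bf a}_i=n{\bf e}_i$, ${\bf a}_0=(1,\dots,1)$), a weight-$r$ combination has all coordinates divisible by $n$, so $r{\bf a}_0$ is so expressible only when $n\mid r$; hence ``the residue of $l$ modulo $p$ is itself an admissible $L_+$-index'' fails, and your pairing of cross terms collapses. Your diagnosis of the exclusion of $p=2$ is also misplaced: in the paper the restriction comes from the estimate $(p-1)/p-1/(p-1)>0$ needed to make the operator $\alpha^*$ converge on the space $S$ (an artifact of the contraction method), whereas the congruence route, where it has been carried out, works for all primes. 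To salvage your approach you would essentially have to prove the Mellit--Vlasenko congruences for the polynomial coefficients $\lambda_i/\lambda_0$ with Frobenius $\lambda\mapsto\lambda^p$; the concluding limit argument and the induction showing $|\Phi_s(\lambda)|=1$ on ${\mathcal D}_+$ (via $\Phi_1(\lambda^p)\equiv\Phi_1(\lambda)^p\pmod p$) are fine, but they are the easy part.
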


{\bf Remark:} If we specialize $\lambda_1,\dots,\lambda_N$ to elements of $\Omega$ of absolute value 
$\leq 1$, Equation~(1.3) allows us to regard $\Phi$ as a function of $t=-1/\lambda_0$ for $|t|<1$.  
By Theorem~1.4, this function of $t$ continues analytically to the region where $|t|\leq 1$ and 
$|\Phi_1(t)| =1$ (for $p\neq 2$).  When $\lambda_1,\dots,\lambda_N\in{\mathbb Z}_p$, this result on 
analytic continuation was proved recently by Mellit and Vlasenko\cite{MV} for all primes $p$.  We 
believe that the restriction $p\neq 2$ in Theorem 1.4 is an artifact of our method and that that 
result is in fact true for all $p$.  

{\bf Example:}(the Dwork family):  Let $N=n$ and let ${\bf a}_i = (0,\dots,n,\dots,0)$, where the 
$n$ occurs in the $i$-th position, for $i=1,\dots,n$.  Let ${\bf a}_0 = (1,\dots,1)$.  Then 
\[ L_+=\{(-nl,l,\dots,l,)\mid l\in{\mathbb Z}_{\geq 0}\} \]
and
\[ \Phi(\lambda) = \sum_{l=0}^{\infty} \frac{(-1)^{nl}(nl)!}{(l!)^n} \bigg(\frac{\lambda_1\cdots
\lambda_n}{\lambda_0^n}\bigg)^l. \]
Then Theorem 1.4 implies that for every prime $p\neq 2$ the ratio $\Phi(\lambda)/\Phi(\lambda^p)$ 
extends to an analytic function on ${\mathcal D}_+$.  We note that J.-D. Yu\cite{Y} has given a 
treatment of analytic continuation for the Dwork family using a more cohomological approach.

In \cite{D2}, Dwork gave a contraction mapping argument to prove $p$-adic analytic continuation of a 
ratio $G(\lambda)/G(\lambda^p)$ of normalized Bessel functions.  In \cite{AS}, we modified Dwork's 
approach to avoid computations of $p$-adic cohomology for exponential sums, which allowed us to 
greatly extend his analytic continuation result.  The proof we give here follows the method of 
\cite{AS}.  

Special values of the ratio $\Phi(\lambda)/\Phi(\lambda^p)$ are related to a unit root of the zeta 
function of the hypersurface $\sum_{i=0}^N \lambda_ix^{{\bf a}_0} = 0$.  We hope to return to this 
connection in a future article.  We believe that the methods of this paper will extend to complete 
intersections as well.

For the remainder of this paper we assume that $p\neq 2$.  This hypothesis is needed in Section~2 to 
define the endomorphism $\alpha^*$ of the space $S$.

\section{Contraction mapping}

We begin by constructing a mapping $\beta$ on a certain space of formal series whose coefficients 
are $p$-adic analytic functions.  The hypothesis that ${\bf a}_0$ is the unique interior point of 
$\Delta$ will imply that $\beta$ is a contraction mapping.

Let $\Omega$ be the completion of an algebraic closure of ${\bf Q}_p$ and put 
\[ R = \bigg\{ \xi(\lambda) = \sum_{\nu\in({\bf Z}_{\geq 0})^N} c_\nu\bigg(\frac{\lambda_1}{\lambda_0}
\bigg)^{\nu_1}\cdots \bigg(\frac{\lambda_N}{\lambda_0}\bigg)^{\nu_N}\mid \text{$c_\nu\in\Omega$ and 
$\{|c_\nu|\}_\nu$ is bounded}\bigg\} \]
Let $R'$ be the set of functions on ${\mathcal D}_+$ that are uniform limits of sequences of 
rational functions in the $\lambda_i/\lambda_0$ that are defined on ${\mathcal D}_+$.
The series in the ring $R$ are convergent and bounded on ${\mathcal D}$ and $R'$ is a subring 
of $R$.  We define a norm on $R$ by setting
\[ |\xi| = \sup_{\lambda\in{\mathcal D}} |\xi(\lambda)|. \]
Note that for $\xi\in R'$ one has $\sup_{\lambda\in{\mathcal D}}|\xi(\lambda)| = 
\sup_{\lambda\in{\mathcal D}_+} |\xi(\lambda)|$.  Both $R$ and $R'$ are complete in this norm.

Let $C$ be the real cone generated by the elements of $A$, let $M=C\cap{\mathbb Z}A$, and let 
$M^\circ\subset M$ be the subset consisting of those points that do not lie on any face of~$C$.  Let 
$\pi^{p-1}=-p$ and let $S$ be the $\Omega$-vector space of formal series
\[ S = \bigg\{\xi(\lambda,x) = \sum_{\mu\in M^{\circ}} \xi_\mu(\lambda) ({\pi}\lambda_0)^{-\mu_0}x^{-\mu} \mid 
\text{$\xi_\mu(\lambda)\in R$ and $\{|\xi_\mu|\}_\mu$ is bounded}\bigg\}. \]
Let $S'$ be defined analogously with the condition ``$\xi_\mu(\lambda)\in R$'' being replaced by 
``$\xi_\mu(\lambda)\in R'$''.  Define a norm on $S$ by setting
\[ |\xi(\lambda,x)| = \sup_\mu\{|\xi_\mu|\}. \]
Both $S$ and $S'$ are complete under this norm.

Define $\theta(t) = \exp(\pi(t-t^p)) = \sum_{i=0}^\infty b_it^i$.  One has 
(Dwork\cite[Sec\-tion~4a)]{D1})
\begin{equation}
{\rm ord}\: b_i\geq \frac{i(p-1)}{p^2}.
\end{equation}
Let
\[ F(\lambda,x) = \prod_{i=0}^N\theta(\lambda_ix^{\hat{\bf a}_i}) = \sum_{\mu\in M} B_\mu(\lambda)x^\mu, \]
where
\[ B_\mu(\lambda) = \sum_{\nu\in({\bf Z}_{\geq 0})^{N+1}} B^{(\mu)}_\nu\lambda^\nu \]
with
\begin{equation}
B^{(\mu)}_\nu = \begin{cases} \prod_{i=0}^N b_{\nu_i}  & \text{if $\sum_{i=0}^N \nu_i\hat{\bf a}_i = \mu$,} 
\\ 0 & \text{if $\sum_{i=0}^N \nu_i\hat{\bf a}_i\neq\mu$.} \end{cases} 
\end{equation}
The equation $\sum_{i=0}^N \nu_i\hat{\bf a}_i = \mu$ has only finitely many solutions $\nu\in({\mathbb 
Z}_{\geq 0})^{N+1}$, so $B_\mu$ is a polynomial in the $\lambda_i$.  Furthermore, all solutions of this 
equation satisfy $\sum_{i=0}^N \nu_i = \mu_0$, so $B_\mu$ is homogeneous of degree $\mu_0$.  We thus have
\begin{equation}
B_\mu(\lambda_0,\dots,\lambda_N) = \lambda_0^{\mu_0}B_\mu(1,\lambda_1/\lambda_0,\dots,\lambda_N/
\lambda_0). 
\end{equation}
Let $\tilde{\pi}\in\Omega$ satisfy ${\rm ord}\;\tilde{\pi} = (p-1)/p^2$.

\begin{lemma}
One has $B_\mu(1,\lambda_1/\lambda_0,\dots,\lambda_N/\lambda_0)\in R'$ and 
\[ |B_\mu(1,\lambda_1/\lambda_0,\dots,\lambda_N/\lambda_0)\leq |\tilde{\pi}^{\mu_0}|. \]
\end{lemma}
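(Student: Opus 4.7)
The plan is to read off the lemma directly from the homogeneity relation (1.3) together with Dwork's coefficient estimate (2.1); no geometry of $\Delta$ enters yet, and in particular the hypothesis that ${\bf a}_0$ is the unique interior lattice point plays no role at this stage.

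First, using (1.3) and the identity $\sum_{i=0}^N \nu_i = \mu_0$ (which is read off from the zeroth coordinate of $\hat{\bf a}_i=(1,{\bf a}_i)$), I would write
\[ B_\mu(1,\lambda_1/\lambda_0,\dots,\lambda_N/\lambda_0) = \sum_{\nu} B^{(\mu)}_\nu \bigg(\frac{\lambda_1}{\lambda_0}\bigg)^{\nu_1}\cdots\bigg(\frac{\lambda_N}{\lambda_0}\bigg)^{\nu_N}, \]
the sum being over the finitely many $\nu\in({\bf Z}_{\geq 0})^{N+1}$ with $\sum_i \nu_i\hat{\bf a}_i=\mu$. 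In particular this is a polynomial in the ratios $\lambda_i/\lambda_0$, hence a fortiori a rational function defined everywhere on~${\mathcal D}_+$, so it lies in~$R'$.

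For the norm bound, I would apply (2.1) to each coefficient: since $B^{(\mu)}_\nu = \prod_{i=0}^N b_{\nu_i}$, one gets
\[ |B^{(\mu)}_\nu| \leq \prod_{i=0}^N |\tilde{\pi}|^{\nu_i} = |\tilde{\pi}|^{\sum_i\nu_i} = |\tilde{\pi}^{\mu_0}|. \]
Each monomial $(\lambda_1/\lambda_0)^{\nu_1}\cdots(\lambda_N/\lambda_0)^{\nu_N}$ has absolute value at most $1$ on~${\mathcal D}$, so the ultrametric inequality immediately yields
\[ \sup_{\lambda\in{\mathcal D}}|B_\mu(1,\lambda_1/\lambda_0,\dots,\lambda_N/\lambda_0)|\leq \max_\nu |B^{(\mu)}_\nu|\leq |\tilde{\pi}^{\mu_0}|. \]

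I do not anticipate any real obstacle here. The only step requiring any care is the homogeneity bookkeeping that converts the evaluation at $(1,\lambda_1/\lambda_0,\dots,\lambda_N/\lambda_0)$ into an honest polynomial in the ratios $\lambda_i/\lambda_0$; after that, the norm bound is an immediate consequence of (2.1) and the nonarchimedean triangle inequality.
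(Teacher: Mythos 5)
Your argument is correct and is essentially the paper's own proof: expand $B_\mu(1,\lambda_1/\lambda_0,\dots,\lambda_N/\lambda_0)$ over the finitely many $\nu$ with $\sum_i\nu_i\hat{\bf a}_i=\mu$, bound each coefficient by $|\tilde{\pi}|^{\sum_i\nu_i}=|\tilde{\pi}^{\mu_0}|$ via (2.1)--(2.2) and $\sum_i\nu_i=\mu_0$, and conclude by the ultrametric inequality. (Only a labeling slip: the homogeneity identity you invoke is (2.3), not (1.3).)
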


\begin{proof}
The first assertion is clear since $B_\mu$ is a polynomial.  We have
\[ B_\mu(1,\lambda_1/\lambda_0,\dots,\lambda_N/\lambda_0)=\sum_{\nu\in({\bf Z}_{\geq 0})^{N+1}} 
B^{(\mu)}_\nu(\lambda_1/\lambda_0)^{\nu_1}\cdots(\lambda_N/\lambda_0)^{\nu_N} \]
Using (2.1) and (2.2) gives
\[ {\rm ord}\: B^{(\mu)}_\nu\geq\sum_{i=0}^N {\rm ord}\: b_{\nu_i}\geq \sum_{i=0}^N \frac{\nu_i(p-1)}{p^2}= 
\mu_0\frac{p-1}{p^2}, \]
which implies the second assertion of the lemma.
\end{proof}

Using (2.3) we write
\begin{equation}
F(\lambda,x) = \sum_{\mu\in M} {B}_\mu(1,\lambda_1/\lambda_0,\dots,\lambda_N/\lambda_0)\lambda_0^{\mu_0} 
x^\mu.
\end{equation}
Let
\[ \xi(\lambda,x) = \sum_{\nu\in M^\circ} \xi_\nu(\lambda)({\pi}\lambda_0)^{-\nu_0}x^{-\nu}\in S. \]
We claim that the product $F(\lambda,x)\xi(\lambda^p,x^p)$ is well-defined as a formal series 
in $x$.  Formally we have
\[ F(\lambda,x)\xi(\lambda^p,x^p) = \sum_{\rho\in{\mathbb Z}^{n+1}} \zeta_\rho(\lambda)\lambda_0^{-\rho_0}
x^{-\rho}, \]
where
\begin{equation}
\zeta_\rho(\lambda) = \sum_{\substack{\mu\in M,\nu\in M^\circ \\ \mu-p\nu = -\rho}} {\pi}^{-\nu_0}
{B}_\mu(1,\lambda_1/\lambda_0,\dots,\lambda_N/\lambda_0)\xi_\nu(\lambda^p). 
\end{equation}
By Lemma 2.4, we have
\begin{equation}
|{\pi}^{-\nu_0}B_\mu(1,\lambda_1/\lambda_0,\dots,\lambda_N/\lambda_0)\xi_\nu(\lambda^p)|\leq 
|\tilde{\pi}^{\mu_0}\pi^{-\nu_0}|\cdot|\xi(\lambda,x)|. 
\end{equation}
Since $\mu=p\nu-\rho$, we have
\begin{align}
{\rm ord}\;\tilde{\pi}^{\mu_0}\pi^{-\nu_0} &= (p\nu_0-\rho_0)\frac{p-1}{p^2} - \frac{\nu_0}{p-1} 
\nonumber \\ 
&= \nu_0\bigg(\frac{p-1}{p} - \frac{1}{p-1}\bigg) - \rho_0\frac{p-1}{p^2}.
\end{align}
Since $(p-1)/p-1/(p-1)>0$ (we are using here our hypothesis that $p\neq 2$), this shows that 
$\tilde{\pi}^{\mu_0}\pi^{-\nu_0}\to 0$ as $\nu\to\infty$, so the series (2.6) converges to an element 
of~$R$.  The same argument shows that if $\xi(\lambda,x)\in S'$, then the series (2.6) converges to 
an element of $R'$.
 
Let $\gamma^\circ$ be the the truncation map
\[ \gamma^\circ\bigg(\sum_{\rho\in{\mathbb Z}^{n+1}} \zeta_\rho(\lambda)\lambda_0^{-\rho_0}x^{-\rho}\bigg) = 
\sum_{\rho\in M^\circ} \zeta_\rho(\lambda)\lambda_0^{-\rho_0}x^{-\rho} \]
and define for $\xi(\lambda,x)\in S$
\begin{align*}
\alpha^*\big(\xi(\lambda,x)\big) &= \gamma^\circ\big(F(\lambda,x)\xi(\lambda^p,x^p)\big) \\
 &= \sum_{\rho\in M^\circ}\zeta_\rho(\lambda)\lambda_0^{-\rho_0}x^{-\rho}. 
\end{align*}
For $\rho\in M^\circ$ put $\eta_\rho(\lambda) = {\pi}^{\rho_0}\zeta_\rho(\lambda)$, so that
\begin{equation}
\alpha^*(\xi(\lambda,x)) = \sum_{\rho\in M^\circ} \eta_\rho(\lambda)({\pi}\lambda_0)^{-\rho_0}x^{-\rho} 
\end{equation}
with (by (2.6))
\begin{equation}
\eta_\rho(\lambda) = \sum_{\substack{\mu\in M,\nu\in M^\circ\\ \mu-p\nu = -\rho}} {\pi}^{\rho_0-\nu_0}
{B}_\mu(1,\lambda_1/\lambda_0,\dots,\lambda_N/\lambda_0)\xi_\nu(\lambda^p). 
\end{equation}

\begin{proposition}
The map $\alpha^*$ is an endomorphism of $S$ and of $S'$, and for $\xi(\lambda,x)\in S$ we have
\begin{equation}
|\alpha^*(\xi(\lambda,x))|\leq |p|\cdot|\xi(\lambda,x)|.
\end{equation}
\end{proposition}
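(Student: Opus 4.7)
The proof splits into two parts: verifying $\alpha^*(\xi)\in S$ (resp.\ $S'$), and proving the norm bound~(2.10). Fix $\rho\in M^\circ$; each summand of~(2.9) has the form $\pi^{\rho_0-\nu_0}B_\mu(1,\lambda_1/\lambda_0,\dots)\xi_\nu(\lambda^p)$ with $\mu\in M$, $\nu\in M^\circ$, and $\mu=p\nu-\rho$. Applying Lemma~2.4 and $|\xi_\nu(\lambda^p)|\le|\xi|$, its absolute value is at most $|\tilde\pi^{\mu_0}\pi^{\rho_0-\nu_0}|\cdot|\xi|$. To see $\eta_\rho\in R$, substitute $\mu_0=p\nu_0-\rho_0$: the ord of this bound is linear in $\nu_0$ with coefficient $(p-1)/p-1/(p-1)>0$ (this uses $p\neq 2$), so the series converges. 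When $\xi\in S'$, each $B_\mu$ is polynomial and $\xi_\nu(\lambda^p)\in R'$, so partial sums lie in $R'$ and completeness gives $\eta_\rho\in R'$.

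The bound~(2.10) requires more: ${\rm ord}\bigl(\tilde\pi^{\mu_0}\pi^{\rho_0-\nu_0}\bigr)=\mu_0(p-1)/p^2+(\rho_0-\nu_0)/(p-1)$ is strictly less than $1$ in certain boundary cases (for instance, it equals $(p-1)^2/p^2$ at $\nu_0=\rho_0=1$, $\mu_0=p-1$), so Lemma~2.4 alone is insufficient. One must sharpen ${\rm ord}\,B_\mu$ by expanding over decompositions $\mu=\sum_i\tilde\nu_i\hat{\bf a}_i$ and using the exact formula $b_m=\pi^m/m!$ for $m<p$ (which gives ${\rm ord}\,b_m=m/(p-1)$, strictly stronger than the Dwork bound~(2.1)), together with further cancellations in $\theta(t)$ for $m\ge p$. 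The hypothesis that ${\bf a}_0$ is the unique interior lattice point of $\Delta$ enters here to restrict the admissible decompositions of $\mu$ when $\mu_0$ is small. Combining these, one obtains
\[ {\rm ord}\bigl(\pi^{\rho_0-\nu_0}B_\mu(1,\lambda_1/\lambda_0,\dots)\bigr)\ge 1 \]
for every admissible triple $(\mu,\nu,\rho)$; this gives $|p|\cdot|\xi|$ as a bound on each summand, hence on $|\eta_\rho|$, and taking sup over $\rho\in M^\circ$ yields~(2.10) along with the endomorphism property of $\alpha^*$ on both $S$ and~$S'$. The principal obstacle is the case-by-case sharpening of ${\rm ord}\,B_\mu$ needed to clear the threshold $1$ in all admissible ranges.
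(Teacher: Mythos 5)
Your reduction of the norm estimate to the termwise bound $|\pi^{\rho_0-\nu_0}B_\mu(1,\lambda_1/\lambda_0,\dots,\lambda_N/\lambda_0)|\leq |p|$ for all admissible $(\mu,\nu,\rho)$ with $\mu=p\nu-\rho$, and your observation that Lemma~2.4 alone is insufficient near the boundary (e.g.\ $\nu_0=\rho_0=1$, $\mu_0=p-1$), both match the paper; so does your convergence argument for the endomorphism property. But the decisive step is missing: you assert that sharpened estimates on ${\rm ord}\,b_m$, together with unspecified ``further cancellations in $\theta(t)$ for $m\geq p$,'' push the order past the threshold $1$ in all cases, and you yourself flag this case analysis as ``the principal obstacle'' rather than carrying it out. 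The paper's actual argument is short and concrete: for $\mu_0\leq 2p-1$ one has $|b_i|\leq|\pi^i|$ for all $i\leq 2p-1$ (exactly $\pi^i/i!$ for $i\leq p-1$, and an easy check for $p\leq i\leq 2p-1$), hence $|B_\mu|\leq|\pi^{\mu_0}|$; combined with the identity $\rho_0-\nu_0+\mu_0=(p-1)\nu_0$ coming from $\mu=p\nu-\rho$, this gives the bound $|p|^{\nu_0}\leq|p|$ because $\nu_0\geq 1$. For $\mu_0\geq 2p$ the same relation together with $\rho_0\geq 1$ forces $\nu_0\geq 3$, and then the Dwork bound of Lemma~2.4 already gives order $>1$ when $p\geq 5$; the single remaining prime $p=3$ is settled by explicitly computing $|b_i|$ for $i\leq 11$ and noting $\nu_0\geq 5$ once $\mu_0\geq 12$. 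None of this appears in your sketch, so as written the proof is not complete.

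A second, substantive error: your claim that the hypothesis that ${\bf a}_0$ is the unique interior lattice point of $\Delta$ ``enters here to restrict the admissible decompositions of $\mu$ when $\mu_0$ is small'' is wrong. The proposition is stated and proved without that hypothesis; all that is used is $\nu_0,\rho_0\geq 1$ for $\nu,\rho\in M^\circ$, which holds simply because $x_0=0$ supports a face of the cone $C$. The uniqueness hypothesis enters only later (Lemma~2.18 and onward), where one needs $\nu_0\geq 2$ for every $\nu\in M^\circ$ other than $\hat{\bf a}_0$. Invoking it at this stage indicates that the mechanism you have in mind for closing the gap is not the one that actually works.
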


\begin{proof}
By (2.9), the proposition follows from the estimate 
\[ |\eta_\rho(\lambda)|\leq |p|\cdot|\xi(\lambda,x)| \quad\text{for all $\rho\in M^\circ$.} \] 
By (2.10), this estimate will follow from the estimate
\begin{equation}
|{\pi}^{\rho_0-\nu_0}{B}_\mu(1,\lambda_1/\lambda_0,\dots,\lambda_N/\lambda_0)|\leq |p|
\end{equation}
for all $\mu\in M$, $\nu\in M^\circ$, $\mu-p\nu=-\rho$.

Consider first the case $\mu_0\leq 2p-1$.  
For $0\leq i\leq p-1$ we have $b_i=\pi^i/i!$, hence $|b_i| = |\pi^i|$.  One checks easily that for 
$p\leq i\leq 2p-1$ one has $|b_i|\leq |\pi^i|$.  This implies by (2.2) that 
$|B_\mu(\lambda)|\leq |\pi^{\mu_0}|$, thus
\[ |{\pi}^{\rho_0-\nu_0}{B}_\mu(1,\lambda_1/\lambda_0,\dots,\lambda_N/\lambda_0)|\leq 
|\pi^{\rho_0-\nu_0+\mu_0}|. \]
Since $\mu=p\nu-\rho$ we have $\rho_0-\nu_0+\mu_0 = (p-1)\nu_0$, so
\begin{equation}
|{\pi}^{\rho_0-\nu_0}{B}_\mu(1,\lambda_1/\lambda_0,\dots,\lambda_N/\lambda_0)|\leq |p|^{\nu_0}. 
\end{equation}
Since $\nu_0\geq 1$ for $\nu\in M^\circ$, Eq.~(2.14) implies (2.13) for $\mu_0\leq 2p-1$.

Now consider the case $\mu_0\geq 2p$.  Lemma 2.4 implies that 
\begin{equation}
|{\pi}^{\rho_0-\nu_0}B_\mu(1,\lambda_1/\lambda_0,\dots,\lambda_N/\lambda_0)|\leq 
|\tilde{\pi}^{\mu_0}\pi^{\rho_0-\nu_0}|.
\end{equation}
We have (using $\mu=p\nu-\rho$)
\begin{align*}
{\rm ord}\;\tilde{\pi}^{\mu_0}\pi^{\rho_0-\nu_0} &= (p\nu_0-\rho_0)\frac{p-1}{p^2} + (\rho_0-\nu_0)
\frac{1}{p-1} \\
&= \nu_0\bigg(\frac{p-1}{p}-\frac{1}{p-1}\bigg) + \rho_0\bigg(\frac{1}{p-1}-\frac{p-1}{p^2}\bigg). 
\end{align*}
Since $\rho\in M^\circ$ we have $\rho_0\geq 1$, and since $\mu_0\geq 2p$ and $\mu=p\nu-\rho$ we must 
have $\nu_0\geq 3$.  It follows that 
\begin{equation}
\nu_0\bigg(\frac{p-1}{p}-\frac{1}{p-1}\bigg) + \rho_0\bigg(\frac{1}{p-1}-\frac{p-1}{p^2}\bigg)\geq 
3\bigg(\frac{p-1}{p}-\frac{1}{p-1}\bigg) + \bigg(\frac{1}{p-1}-\frac{p-1}{p^2}\bigg). 
\end{equation}
This latter expression is $>1$ for $p\geq 5$, hence (2.15) implies (2.13) when $\mu_0\geq 2p$ and 
$p\geq 5$.  

Finally, suppose that $p=3$ and $\mu_0\geq 2p = 6$.  By explicitly computing $b_i$, one checks that 
$|b_i| = |\pi^i|$ for $i\leq 8$.  This implies by (2.2) that $|B_\mu(\lambda)|\leq |\pi^{\mu_0}|$ for 
$\mu_0\leq 8$, so~(2.14) holds in this case and we conclude as before that~(2.13) holds also.  For 
$i=9,10$, one has $|b_i| = |\pi^{i-4}|$, so $|B_\mu(\lambda)|\leq |\pi^{\mu_0-4}|$ for $\mu_0=9,10$.  We 
thus have 
\[ |{\pi}^{\rho_0-\nu_0}{B}_\mu(1,\lambda_1/\lambda_0,\dots,\lambda_N/\lambda_0)|\leq 
|\pi^{\rho_0-\nu_0+\mu_0-4}| \]
for $\mu_0=9,10$.  Since $\mu=p\nu-\rho$, this gives
\[ |{\pi}^{\rho_0-\nu_0}{B}_\mu(1,\lambda_1/\lambda_0,\dots,\lambda_N/\lambda_0)|\leq |3^{\nu_0-2}|. \]
For $\mu_0=9,10$, $\mu=p\nu-\rho$ implies that $\nu_0\geq 4$, hence $|3^{\nu_0-2}|\leq |3^2|$ 
and~(2.13) holds.
One computes that $|b_{11}| = |\pi^9|$, so $|B_\mu(\lambda)|\leq |\pi^9|$ for $\mu_0=11$.  This gives 
(using $\mu=p\nu-\rho$) 
\[ |{\pi}^{\rho_0-\nu_0}{B}_\mu(1,\lambda_1/\lambda_0,\dots,\lambda_N/\lambda_0)|\leq |3^{\nu_0-1}| = |3^3| \]
since $\nu_0\geq 4$ for $\mu_0=11$, so (2.13) holds in this case.  Finally, if $\mu_0\geq 12$, then 
$\mu=p\nu-\rho$ implies $\nu_0\geq 5$.  When $p=3$, the left-hand side of (2.16) is $>1$ when 
$\nu_0\geq 5$, so~(2.15) implies~(2.13) in this case.
\end{proof}

{\bf Remark:} It follows from the proof of Proposition 2.11 that equality can hold in (2.13) only if 
$\nu_0=1$.  And since 
\[ {\pi}^{\rho_0-\nu_0}{B}_\mu(1,\lambda_1/\lambda_0,\dots,\lambda_N/\lambda_0)\in{\mathbb Q}_p(\pi,
\tilde{\pi})[\lambda_1/\lambda_0,\dots,\lambda_N/\lambda_0] \]
and ${\mathbb Q}_p(\pi,\tilde{\pi})$ is a discretely valued field, we conclude that there exists a 
rational number $C$, $0<C<1$, such that
\begin{equation}
|{\pi}^{\rho_0-\nu_0}{B}_\mu(1,\lambda_1/\lambda_0,\dots,\lambda_N/\lambda_0)|\leq C|p|
\end{equation}
for all $\mu\in M$, $\nu\in M^\circ$, $\mu-p\nu=-\rho$, with $\nu_0>1$.  

\begin{lemma}
Suppose ${\bf a}_0$ is the unique interior lattice point of $\Delta$.  If 
$\xi_{\hat{\bf a}_0}(\lambda)=0$, then $|\alpha^*(\xi(\lambda,x))|\leq C |p|\cdot|\xi(\lambda,x)|$.
\end{lemma}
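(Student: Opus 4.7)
The plan is to show that under the hypothesis, every nonzero term in the sum (2.10) defining $\eta_\rho(\lambda)$ already satisfies the sharper bound (2.17) from the remark after Proposition 2.11, rather than only the weaker bound used in (2.13). This immediately gives $|\eta_\rho(\lambda)|\leq C|p|\cdot|\xi(\lambda,x)|$ for every $\rho\in M^\circ$, which is the desired estimate for $\alpha^*$.

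The remark tells us that (2.17) holds whenever $\nu_0>1$, so the only terms where we cannot use it are those with $\nu_0=1$. The key geometric step is to show, using the hypothesis, that the unique $\nu\in M^\circ$ with $\nu_0=1$ is $\nu=\hat{\bf a}_0$. For this I would argue that $C\cap\{x_0=1\}=\{1\}\times\Delta$: indeed $C$ is the real cone generated by the vectors $\hat{\bf a}_j=(1,{\bf a}_j)$, so points of $C$ at height $x_0=1$ are exactly convex combinations of the ${\bf a}_j$, and since ${\bf a}_0$ lies in the interior of $\Delta$ the convex hull of $\{{\bf a}_0,\dots,{\bf a}_N\}$ coincides with $\Delta$. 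Lattice points of $M$ at height $1$ thereby correspond bijectively to lattice points of $\Delta$, and the proper faces of $C$ meeting the slice $\{x_0=1\}$ are exactly the cones over the proper faces of $\Delta$. Consequently, lattice points of $M^\circ$ at height $1$ correspond to interior lattice points of $\Delta$, and by hypothesis ${\bf a}_0$ is the only one.

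Given this, if $\xi_{\hat{\bf a}_0}(\lambda)=0$ then every contributing term in (2.10) has $\nu\in M^\circ$ with $\nu_0\geq 2$, so (2.17) applies. Combining with $|\xi_\nu(\lambda^p)|\leq|\xi(\lambda,x)|$ yields
\[ |\eta_\rho(\lambda)|\leq C|p|\cdot|\xi(\lambda,x)| \quad\text{for all $\rho\in M^\circ$,} \]
and the estimate on $\alpha^*$ follows by definition of the norm on $S$.

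The only nontrivial point is the geometric identification of $M^\circ\cap\{\nu_0=1\}$ with the set of interior lattice points of $\Delta$; once that is established, the lemma reduces to the remark following Proposition 2.11, and there is no further computation to perform.
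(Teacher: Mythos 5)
Your argument is correct and is essentially the paper's own proof: the hypothesis forces $\hat{\bf a}_0$ to be the only element of $M^\circ$ at height $\nu_0=1$, so when $\xi_{\hat{\bf a}_0}=0$ every surviving term of (2.10) has $\nu_0\geq 2$ and the refined bound (2.17) applies. Your added geometric justification (identifying height-one points of $M^\circ$ with interior lattice points of $\Delta$) is just a spelled-out version of the step the paper states in one line.
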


\begin{proof}
Since ${\bf a}_0$ is the unique interior lattice point of $\Delta$, the point $\nu=\hat{\bf a}_0$ is 
the unique element of~$M^\circ$ with $\nu_0=1$.   So for $\nu\in M^\circ$, $\nu\neq\hat{\bf a}_0$, we 
have $\nu_0\geq 2$.  The assertion of the lemma then follows from (2.10) and (2.17).
\end{proof}

We examine the polynomial $B_{(p-1)\hat{\bf a}_0}(\lambda)$ to determine its relation to 
$\Phi_1(\lambda)$.  Let
\[ V = \bigg\{v=(v_0,\dots,v_N)\in ({\mathbb Z}_{\geq 0})^{N+1} \mid \sum_{i=0}^N v_i\hat{\bf a}_i = 
(p-1)\hat{\bf a}_0\bigg\}. \]
From (2.2) we have
\[ B_{(p-1)\hat{\bf a}_0}(\lambda) = \sum_{v\in V} \bigg(\prod_{i=0}^N b_{v_i}\bigg)\lambda_0^{v_0}\cdots
\lambda_N^{v_N}. \]
For $v\in V$ we have $\sum_{i=0}^N v_i = p-1$ so $v_0=(p-1)-v_1-\cdots-v_N$.  Furthermore, 
$v_i\leq p-1$ for all $i$ so $b_{v_i} = \pi^{v_i}/v_i!$.  And since $\pi^{p-1} = -p$, this implies
\[ B_{(p-1)\hat{\bf a}_0}(\lambda) = -p\lambda_0^{p-1}\sum_{v\in V}\frac{(\lambda_1/\lambda_0)^{v_1}\cdots 
(\lambda_N/\lambda_0)^{v_N}}{(p-1-v_1-\ldots-v_N)!v_1!\cdots v_N!}. \]
It follows that
\[ p^{-1}B_{(p-1){\bf a}_0}(1,\lambda_1/\lambda_0,\dots,\lambda_N/\lambda_0) = -\sum_{v\in V}
\frac{(\lambda_1/\lambda_0)^{v_1}\cdots (\lambda_N/\lambda_0)^{v_N}}
{(p-1-v_1-\ldots-v_N)!v_1!\cdots v_N!}, \]
 a polynomial in the $\lambda_i/\lambda_0$ with $p$-integral coefficients.

\begin{lemma}
$\Phi_1(\lambda)\equiv p^{-1}B_{(p-1){\bf a}_0}(1,\lambda_1/\lambda_0,\dots,\lambda_N/\lambda_0) \pmod{p}$.
\end{lemma}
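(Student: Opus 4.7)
The plan is to exhibit a bijection between the index set $L_+ \cap \{l_1 + \cdots + l_N \leq p-1\}$ appearing in $\Phi_1$ and the set $V$ appearing in the formula for $p^{-1} B_{(p-1)\hat{\bf a}_0}$, and then compare the coefficients using Wilson's theorem.

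First I would set up the bijection. Given $l \in L_+$ with $k := l_1+\cdots+l_N \leq p-1$, the relation $\sum_{i=0}^N l_i \hat{\bf a}_i = 0$ forces $l_0 = -k$ and $\sum_{i=1}^N l_i {\bf a}_i = k{\bf a}_0$. Define $v$ by $v_i = l_i$ for $i \geq 1$ and $v_0 = p-1-k$. Then $\sum_{i=0}^N v_i = p-1$, and
\[ \sum_{i=0}^N v_i \hat{\bf a}_i = (p-1-k)\hat{\bf a}_0 + k \hat{\bf a}_0 = (p-1)\hat{\bf a}_0, \]
so $v \in V$. The inverse map takes $v \in V$ to $l_i = v_i$ ($i \geq 1$), $l_0 = -(v_1+\cdots+v_N) = v_0 - (p-1)$; one checks analogously that $l \in L_+$ with $l_1+\cdots+l_N = p-1-v_0 \leq p-1$.

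Next I would match coefficients under this bijection. The coefficient in $\Phi_1$ of $\prod (\lambda_i/\lambda_0)^{l_i}$ is $(-1)^k k!/(l_1! \cdots l_N!)$, while the corresponding coefficient in the displayed expression for $p^{-1} B_{(p-1)\hat{\bf a}_0}$ is $-1/((p-1-k)! \, l_1! \cdots l_N!)$. So the claim reduces to the elementary congruence
\[ (-1)^k k! \equiv \frac{-1}{(p-1-k)!} \pmod{p}, \]
equivalently $(-1)^{k+1} k!\,(p-1-k)! \equiv 1 \pmod{p}$. This follows by writing $(p-1)! = (p-1)(p-2)\cdots(p-k) \cdot (p-1-k)!$, observing $(p-j) \equiv -j \pmod p$ so $(p-1)(p-2)\cdots(p-k) \equiv (-1)^k k! \pmod p$, and invoking Wilson's theorem $(p-1)! \equiv -1 \pmod p$ to obtain $(-1)^k k!\,(p-1-k)! \equiv -1 \pmod p$.

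There is no real obstacle here; the only thing to watch is bookkeeping of signs and the boundary condition $v_0 \geq 0 \Leftrightarrow k \leq p-1$, which ensures that the truncation in $\Phi_1$ corresponds exactly to the positivity constraint defining $V$. Combining the bijection with the termwise congruence yields the claimed identity modulo $p$.
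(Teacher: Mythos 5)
Your proof is correct and follows essentially the same route as the paper: the same bijection between $V$ and the elements $l\in L_+$ with $l_1+\cdots+l_N\leq p-1$, followed by the termwise congruence $(-1)^k k!\equiv -1/(p-1-k)!\pmod p$ deduced from Wilson's theorem. The paper's proof is just a terser version of your argument.
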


\begin{proof}
The map $(v_0,\dots,v_N)\mapsto (-v_1-\dots-v_N,v_1,\dots,v_N)$ is a one-to-one correspondence from 
$V$ to the elements $l\in L_+$ satisfying $l_1+\cdots+l_N\leq p-1$.  The lemma then follows 
immediately from the congruence
\[ -\frac{1}{(p-1-m)!}\equiv (-1)^m m!\pmod{p} \quad\text{for $0\leq m\leq p-1$}, \]
which is implied by the congruence $(p-1)!\equiv -1\pmod{p}$.
\end{proof}

\begin{corollary} The polynomial $B_{(p-1){\bf a}_0}(1,\lambda_1/\lambda_0,\dots,\lambda_N/\lambda_0)$ is 
an invertible element of $R'$ with $|B_{(p-1){\bf a}_0}(1,\lambda_1/\lambda_0,\dots,\lambda_N/\lambda_0)| 
= |p|$.
\end{corollary}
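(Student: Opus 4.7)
The plan is to combine Lemma 2.19 with the definition of $\mathcal{D}_+$ and then exploit completeness of $R'$ via the standard geometric-series trick. By Lemma 2.19, both $\Phi_1(\lambda)$ and $p^{-1}B_{(p-1){\bf a}_0}(1,\lambda_1/\lambda_0,\dots,\lambda_N/\lambda_0)$ are polynomials in the $\lambda_i/\lambda_0$ with $p$-integral coefficients whose difference is $p$ times such a polynomial, so I would first write
\[ p^{-1}B_{(p-1){\bf a}_0}(1,\lambda_1/\lambda_0,\dots,\lambda_N/\lambda_0) = \Phi_1(\lambda) + p\,h(\lambda), \]
with $h$ a polynomial in the $\lambda_i/\lambda_0$ having $p$-integral coefficients.

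Since $|\Phi_1(\lambda)|=1$ on $\mathcal{D}_+$ by the very definition of $\mathcal{D}_+$, while $|p\,h(\lambda)|\leq|p|<1$ there, the ultrametric inequality forces $|p^{-1}B_{(p-1){\bf a}_0}(1,\lambda_1/\lambda_0,\dots,\lambda_N/\lambda_0)|=1$, hence $|B_{(p-1){\bf a}_0}(1,\lambda_1/\lambda_0,\dots,\lambda_N/\lambda_0)|=|p|$ in the norm on $R'$ (which, for elements of $R'$, is the sup over $\mathcal{D}_+$). This settles the absolute-value assertion.

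For invertibility, the key observation is that $|\Phi_1|=1$ on $\mathcal{D}_+$ in particular means $\Phi_1$ has no zero on $\mathcal{D}_+$, so the rational function $1/\Phi_1$ in the $\lambda_i/\lambda_0$ is defined on $\mathcal{D}_+$ and therefore belongs to $R'$. I would then factor
\[ p^{-1}B_{(p-1){\bf a}_0}(1,\lambda_1/\lambda_0,\dots,\lambda_N/\lambda_0) = \Phi_1\cdot\bigl(1 + p\,h/\Phi_1\bigr), \]
note that $|p\,h/\Phi_1|\leq|p|<1$, and invert $1+p\,h/\Phi_1$ in $R'$ by the geometric series $\sum_{k\geq 0}(-p\,h/\Phi_1)^k$, which converges because $R'$ is complete. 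Multiplying the resulting inverse of $p^{-1}B_{(p-1){\bf a}_0}(1,\dots)$ by the scalar $p^{-1}\in R'$ then produces the inverse of $B_{(p-1){\bf a}_0}(1,\dots)$ inside $R'$.

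The only point requiring real care is the claim that $1/\Phi_1\in R'$; this genuinely uses the description of $R'$ as uniform limits of rational functions in the $\lambda_i/\lambda_0$ that are defined on $\mathcal{D}_+$, rather than any purely formal invertibility criterion on power series. Once this is in hand, the remainder is routine ultrametric estimation and a Banach-style inversion in the complete ring $R'$.
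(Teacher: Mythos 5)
Your argument is correct, and its core coincides with the paper's: invertibility rests on Lemma 2.19 together with the fact that $|\Phi_1(\lambda)|=1$ on ${\mathcal D}_+$, so that $p^{-1}B_{(p-1){\bf a}_0}(1,\lambda_1/\lambda_0,\dots,\lambda_N/\lambda_0)$ takes unit values on ${\mathcal D}_+$ and can be inverted inside $R'$. Two small differences from the paper are worth noting. First, your geometric-series step is superfluous: once you know that $p^{-1}B_{(p-1){\bf a}_0}(1,\dots)$ is a polynomial in the $\lambda_i/\lambda_0$ with no zeros on ${\mathcal D}_+$, its reciprocal is itself a rational function in the $\lambda_i/\lambda_0$ defined on ${\mathcal D}_+$ and hence lies in $R'$ directly; this is evidently what the paper intends by calling the first assertion an immediate consequence of Lemma 2.19 (your Banach-style inversion is nevertheless valid, since the sup norm is submultiplicative and $R'$ is complete, and it has the virtue of not leaning on the precise description of $R'$). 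Second, for the norm you evaluate pointwise on ${\mathcal D}_+$ and invoke the stated equality $\sup_{\mathcal D}|\xi|=\sup_{{\mathcal D}_+}|\xi|$ for $\xi\in R'$ (alternatively, note that $|\Phi_1(\lambda)|=1$ already holds on ${\mathcal D}$ itself, since $\Phi_1=1+(\text{terms of absolute value}<1)$ there), whereas the paper argues coefficientwise: all coefficients of $B_{(p-1){\bf a}_0}(1,\dots)$ are divisible by $p$ and the constant term equals $-p/(p-1)!$, which pins the sup over ${\mathcal D}$ at exactly $|p|$. Both routes are sound and yield the same conclusion.
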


\begin{proof}
The first assertion is an immediate consequence of Lemma 2.19.  The assertion about the norm follows 
from the fact that all the coefficients are divisible by~$p$ and the constant term equals $-p/(p-1)!$.
\end{proof}

Suppose that ${\bf a}_0$ is the unique interior lattice point of $\Delta$, so that 
$\nu=\hat{\bf a}_0$ is the unique element of $M^\circ$ with $\nu_0=1$.  From Equation (2.10) we have
\begin{equation}
\begin{split}
\eta_{\hat{\bf a}_0}(\lambda) &= \sum_{\substack{\mu\in M,\nu\in M^\circ\\ \mu=p\nu -\hat{\bf a}_0}} {\pi}^{1-\nu_0} 
B_\mu(1,\lambda_1/\lambda_0,\dots,\lambda_N/\lambda_0)\xi_\nu(\lambda^p) \\
 & = B_{(p-1)\hat{\bf a}_0}(1,\lambda_1/\lambda_0,\dots,\lambda_N/\lambda_0)\xi_{\hat{\bf a}_0}(\lambda^p)  \\
& \quad +\sum_{\substack{\mu\in M,\nu\in M^\circ\\ \mu=p\nu -\hat{\bf a}_0\\ \nu_0\geq 2}} {\pi}^{1-\nu_0} 
B_\mu(1,\lambda_1/\lambda_0,\dots,\lambda_N/\lambda_0)\xi_\nu(\lambda^p).
\end{split}
\end{equation}

\begin{lemma}
Suppose that ${\bf a}_0$ is the unique interior lattice point of $\Delta$.  
If $\xi_{\hat{\bf a}_0}(\lambda)$ is an invertible element of $R$ (resp.~$R'$) and 
$|\xi_{\hat{\bf a}_0}(\lambda)|= |\xi(\lambda,x)|$, then $\eta_{\hat{\bf a}_0}(\lambda)$ is also an 
invertible element of $R$ (resp.~$R'$) and $|\eta(\lambda,x)| = |\eta_{\hat{\bf a}_0}(\lambda)| = 
|p|\cdot |\xi_{\hat{\bf a}_0}(\lambda)|$.
\end{lemma}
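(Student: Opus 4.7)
The plan is to decompose $\eta_{\hat{\bf a}_0}$ along the split already displayed in (2.21), handle each piece with tools already at hand, and conclude by a standard geometric series inversion. Write $\eta_{\hat{\bf a}_0} = A + B$, where
\[ A(\lambda) = B_{(p-1)\hat{\bf a}_0}(1,\lambda_1/\lambda_0,\dots,\lambda_N/\lambda_0)\,\xi_{\hat{\bf a}_0}(\lambda^p) \]
is the contribution from $\nu = \hat{\bf a}_0$ (which forces $\mu=(p-1)\hat{\bf a}_0$), and $B$ is the remaining sum over $\nu\in M^\circ$ with $\nu_0 \geq 2$. The goal is to show that $|A| = |p|\cdot|\xi_{\hat{\bf a}_0}|$, that $A$ is invertible in $R$ (respectively $R'$), and that $|B| < |A|$.

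For the norm and invertibility of $A$: Corollary 2.20 gives that $B_{(p-1)\hat{\bf a}_0}(1,\lambda_1/\lambda_0,\dots,\lambda_N/\lambda_0)$ is invertible in $R'$ with norm $|p|$. For the factor $\xi_{\hat{\bf a}_0}(\lambda^p)$, the key observation is that the substitution $\lambda\mapsto\lambda^p$ on $R$ merely reindexes the coefficients of a power series in the ratios $\lambda_i/\lambda_0$, so it preserves the Gauss norm and sends invertible elements to invertible elements; on $R'$ the same holds provided $\lambda\mapsto\lambda^p$ maps ${\mathcal D}_+$ into itself, which reduces to checking that $|\Phi_1(\lambda^p)|=1$ whenever $|\Phi_1(\lambda)|=1$. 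The latter is a Frobenius-style congruence $\Phi_1(\lambda^p)\equiv\Phi_1(\lambda)^p\pmod p$, valid because $\Phi_1$ has integer coefficients. Multiplicativity of the norm on $R$ then yields $|A|=|p|\cdot|\xi_{\hat{\bf a}_0}|$, and $A$ is invertible as a product of two invertibles.

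For the error $B$, the Remark after Proposition 2.11 supplies a fixed constant $0<C<1$ such that
\[ |\pi^{\rho_0-\nu_0}B_\mu(1,\lambda_1/\lambda_0,\dots,\lambda_N/\lambda_0)|\leq C|p| \]
whenever $\nu_0\geq 2$. Applied termwise, and using the hypothesis $|\xi_{\hat{\bf a}_0}|=|\xi(\lambda,x)|$, this yields $|B|\leq C|p|\cdot|\xi_{\hat{\bf a}_0}|$. Since $|B|<|A|$, the strong triangle inequality gives $|\eta_{\hat{\bf a}_0}|=|A|=|p|\cdot|\xi_{\hat{\bf a}_0}|$; writing $\eta_{\hat{\bf a}_0}=A(1+A^{-1}B)$ with $|A^{-1}B|=|B|/|A|\leq C<1$ shows $1+A^{-1}B$ is invertible by a geometric series, hence so is $\eta_{\hat{\bf a}_0}$. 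The equality $|\eta(\lambda,x)|=|\eta_{\hat{\bf a}_0}|$ then follows by sandwiching: $|\eta(\lambda,x)|\geq|\eta_{\hat{\bf a}_0}|$ by definition of the sup norm on $S$, while Proposition 2.11 gives $|\eta(\lambda,x)|\leq|p|\cdot|\xi(\lambda,x)|=|p|\cdot|\xi_{\hat{\bf a}_0}|$.

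The main technical point is confirming that $\lambda\mapsto\lambda^p$ respects the ring $R'$: multiplicativity of the Gauss norm on $R$ and the analogous statement for $R$ alone are routine, but the stability of ${\mathcal D}_+$ under this substitution genuinely depends on the Frobenius congruence for $\Phi_1$ sketched above, which in turn uses the $\mathbb{Z}$-integrality of the coefficients of $\Phi_1$ established in Section~1.
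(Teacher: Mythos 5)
Your proof is correct and follows essentially the same route as the paper's: the decomposition (2.21), Corollary 2.20 for the main term, the constant $C$ of (2.17) for the $\nu_0\geq 2$ tail, the ultrametric inequality plus a geometric-series inversion for invertibility, and (2.12) for the equality $|\eta(\lambda,x)|=|\eta_{\hat{\bf a}_0}(\lambda)|$. Your only addition is the explicit check that $\lambda\mapsto\lambda^p$ preserves $R'$, via stability of ${\mathcal D}_+$ and the congruence $\Phi_1(\lambda^p)\equiv\Phi_1(\lambda)^p\pmod{p}$ — a detail the paper leaves implicit (it is already needed when asserting that $\alpha^*$ is stable on $S'$).
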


\begin{proof}
By Corollary 2.20 we have that $B_{(p-1)\hat{\bf a}_0}(1,\lambda_1/\lambda_0,\dots,\lambda_N/\lambda_0)
\xi_{\hat{\bf a}_0}(\lambda^p)$ is an invertible element of norm $|p|\cdot|\xi_{\hat{\bf a}_0}(\lambda)|$.  
By hypothesis, we have
\[ |\xi_\nu(\lambda^p)|\leq |\xi_{\hat{\bf a}_0}(\lambda)|\quad\text{for all $\nu\in M^\circ$.} \]
Equation (2.17) with $\rho = \hat{\bf a}_0$ gives
\begin{equation}
|{\pi}^{1-\nu_0} B_\mu(1,\lambda_1/\lambda_0,\dots,\lambda_N/\lambda_0)|\leq C|p|
\end{equation}
for all $\mu\in M$, $\nu\in M^\circ$, $\mu-p\nu=-\hat{\bf a}_0$, $\nu_0\geq 2$ and some constant $C$, 
$0<C<1$.
 Equation (2.21) then implies that $\eta_{\hat{\bf a}_0}(\lambda)$ is invertible and that 
\[ |\eta_{\hat{\bf a}_0}(\lambda)|=|p|\cdot |\xi_{\hat{\bf a}_0}(\lambda)|. \]
Equation (2.12) then implies that $|\eta(\lambda,x)| = |p|\cdot|\xi_{\hat{\bf a}_0}(\lambda)|$.  
\end{proof}

Suppose that ${\bf a}_0$ is the unique interior lattice point of $\Delta$.  Put 
\[ T = \{\xi(\lambda,x)\in S\mid \text{$\xi_{\hat{\bf a}_0}(\lambda) = 1$ and $|\xi(\lambda,x)| = 1$}\} 
\]
and put $T' = T\cap S'$.  It follows from Lemma~2.22 that if $\xi(\lambda,x)\in T$, then 
$\eta_{\hat{\bf a}_0}(\lambda)$ is invertible.  We may thus define for $\xi(\lambda,x)\in T$
\[ \beta(\xi(\lambda,x)) = \frac{\alpha^*(\xi(\lambda,x))}{\eta_{\hat{\bf a}_0}(\lambda)}. \]
Lemma 2.22 also implies that 
\[ \bigg|\frac{\alpha^*(\xi(\lambda,x))}{\eta_{\hat{\bf a}_0}(\lambda)}\bigg|= 1, \]
so $\beta(T)\subseteq T$.  It is then clear that $\beta(T')\subseteq T'$.

\begin{proposition}
Suppose that ${\bf a}_0$ is the unique interior lattice point of $\Delta$.  Then the operator 
$\beta$ is a contraction mapping on the complete metric space $T$.  More precisely, for $C$ as in 
$(2.17)$, if $\xi^{(1)}(\lambda,x),\xi^{(2)}(\lambda,x)\in T$, then
\[ |\beta\big(\xi^{(1)}(\lambda,x)\big)-\beta\big(\xi^{(2)}(\lambda,x)\big)|\leq C|\xi^{(1)}(\lambda,x)-
\xi^{(2)}(\lambda,x)|. \]
\end{proposition}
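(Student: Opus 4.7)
The plan is to reduce the contraction estimate to Lemma 2.18 (the estimate on $\alpha^*$ applied to elements whose $\hat{\bf a}_0$-coefficient vanishes) by a standard algebraic manipulation. Set $\psi = \xi^{(1)} - \xi^{(2)}$; since both $\xi^{(i)} \in T$ have $\xi^{(i)}_{\hat{\bf a}_0}(\lambda) = 1$, we have $\psi_{\hat{\bf a}_0}(\lambda) = 0$, and obviously $|\psi| \leq \max(|\xi^{(1)}|, |\xi^{(2)}|) \leq 1$. The linearity of $\alpha^*$ (which is immediate from its definition as the composition of multiplication by $F(\lambda,x)$ with the truncation $\gamma^\circ$) together with Lemma~2.18 therefore yields
\[ |\alpha^*(\psi)| \leq C|p|\cdot|\psi|. \]
In particular, reading off the coefficient of $(\pi\lambda_0)^{-1}x^{-\hat{\bf a}_0}$, we get $|\eta^{(1)}_{\hat{\bf a}_0}(\lambda) - \eta^{(2)}_{\hat{\bf a}_0}(\lambda)| \leq C|p|\cdot|\psi|$, where $\eta^{(i)}_{\hat{\bf a}_0}(\lambda)$ denotes the corresponding coefficient for $\xi^{(i)}$.

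Next I would write
\[ \beta(\xi^{(1)}) - \beta(\xi^{(2)}) = \frac{\alpha^*(\xi^{(1)})\bigl(\eta^{(2)}_{\hat{\bf a}_0}(\lambda) - \eta^{(1)}_{\hat{\bf a}_0}(\lambda)\bigr)}{\eta^{(1)}_{\hat{\bf a}_0}(\lambda)\,\eta^{(2)}_{\hat{\bf a}_0}(\lambda)} + \frac{\alpha^*(\psi)}{\eta^{(2)}_{\hat{\bf a}_0}(\lambda)} \]
by the usual ``add and subtract'' trick. Each factor in the denominator has norm exactly $|p|$ by Lemma~2.22 applied to $\xi^{(i)}$ (using that $\xi^{(i)}_{\hat{\bf a}_0} = 1$ is an invertible element of norm $1$ equal to $|\xi^{(i)}|$). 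The numerator of the first term has norm at most $|p|\cdot C|p|\cdot|\psi|$, using Proposition~2.11 applied to $\xi^{(1)}$ for the factor $\alpha^*(\xi^{(1)})$. Hence the first term is bounded in norm by $C|\psi|$. The second term is bounded in norm by $C|p|\cdot|\psi|/|p| = C|\psi|$ directly from the estimate on $\alpha^*(\psi)$ above.

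Combining the two estimates via the ultrametric inequality gives
\[ |\beta(\xi^{(1)}) - \beta(\xi^{(2)})| \leq C\,|\xi^{(1)} - \xi^{(2)}|, \]
which is the desired bound. Since $0 < C < 1$ and $T$ is a closed subset of the complete normed space $S$, $\beta$ is a contraction mapping.

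The only mild obstacle is the bookkeeping at the decomposition step and keeping straight which norms are already known to be equalities versus inequalities; once Lemma~2.18 is available, the rest is just the ultrametric triangle inequality. The hypothesis $p \neq 2$ enters only implicitly, through its role in guaranteeing $C < 1$ in (2.17).
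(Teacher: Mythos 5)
Your proof is correct and follows essentially the same route as the paper: the same add-and-subtract decomposition of $\beta(\xi^{(1)})-\beta(\xi^{(2)})$ (merely with the roles of the indices $1$ and $2$ interchanged in the two terms), estimated via Lemma~2.18 applied to $\psi=\xi^{(1)}-\xi^{(2)}$ (whose $\hat{\bf a}_0$-coefficient vanishes), Lemma~2.22 for $|\eta^{(i)}_{\hat{\bf a}_0}|=|p|$, the bound (2.12) for $\alpha^*(\xi^{(i)})$, and the ultrametric inequality.
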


\begin{proof}
We have (in the obvious notation)
\begin{equation*}
\begin{split}
\beta\big(\xi^{(1)}(\lambda,x)\big)-\beta\big(\xi^{(2)}(\lambda,x)\big) &= \frac{\alpha^*\big(
\xi^{(1)}(\lambda,x)\big)}{\eta^{(1)}_{\hat{\bf a}_0}(\lambda)} - \frac{\alpha^*\big(\xi^{(2)}(\lambda,x)
\big)}{\eta^{(2)}_{\hat{\bf a}_0}(\lambda)} \\
 &= \frac{\alpha^*\big(\xi^{(1)}(\lambda,x)-\xi^{(2)}(\lambda,x)\big)}{\eta^{(1)}_{\hat{\bf a}_0}(\lambda)}
 \\
 & \qquad - \alpha^*\big(\xi^{(2)}(\lambda,x)\big)\frac{\eta^{(1)}_{\hat{\bf a}_0}(\lambda) - 
\eta^{(2)}_{\hat{\bf a}_0}(\lambda)}{\eta^{(1)}_{\hat{\bf a}_0}(\lambda)\eta^{(2)}_{\hat{\bf a}_0}(\lambda)}.
\end{split}
\end{equation*}
By Lemmas 2.18 and 2.22 we have
\[ \bigg|\frac{\alpha^*(\xi^{(1)}(\lambda,x)-\xi^{(2)}(\lambda,x))}{\eta^{(1)}_{\hat{\bf a}_0}(\lambda)}
\bigg| \leq
C|\xi^{(1)}(\lambda,x)-\xi^{(2)}(\lambda,x))|. \]
Since $\eta^{(1)}_{\hat{\bf a}_0}(\lambda)-\eta^{(2)}_{\hat{\bf a}_0}(\lambda)$ is the coefficient of 
$x^{-\hat{\bf a}_0}$ in $\alpha^*\big(\xi^{(1)}(\lambda,x)-\xi^{(2)}(\lambda,x)\big)$, we have
\[ |\eta^{(1)}_{\hat{\bf a}_0}(\lambda)-\eta^{(2)}_{\hat{\bf a}_0}(\lambda)|\leq 
|\alpha^*\big(\xi^{(1)}(\lambda,x)-\xi^{(2)}(\lambda,x)\big)|\leq C |p|\cdot |\xi^{(1)}(\lambda,x)-
\xi^{(2)}(\lambda,x))| \]
by Lemma 2.18.  We have $|\eta^{(1)}_{\hat{\bf a}_0}(\lambda)\eta^{(2)}_{\hat{\bf a}_0}(\lambda)|=|p^2|$ by 
Lemma~2.22, so by (2.12)
\[ \bigg| \alpha^*\big(\xi^{(2)}(\lambda,x)\big)\frac{\eta^{(1)}_{\hat{\bf a}_0}(\lambda) - 
\eta^{(2)}_{\hat{\bf a}_0}(\lambda)}{\eta^{(1)}_{\hat{\bf a}_0}(\lambda)\eta^{(2)}_{\hat{\bf a}_0}(\lambda)}\bigg|
\leq
C|\xi^{(1)}(\lambda,x)-\xi^{(2)}(\lambda,x))|. \]
This establishes the proposition.
\begin{comment}
One has $|\tilde{\pi}^{3(p-1)}/p^2|<1$ for $p\geq 7$, so the proposition is established in that case.  
For $p=5$, one has by (2.16) and (2.20) that
\[ |\eta^{(1)}_{\hat{\bf a}_0}(\lambda)-\eta^{(2)}_{\hat{\bf a}_0}(\lambda)|\leq
 |\tilde{\pi}^{12}|\cdot |\xi^{(1)}(\lambda,x)-\xi^{(2)}(\lambda,x))|. \]
It follows that 
\[ \bigg| \alpha^*(\xi^{(2)}(\lambda,x))\frac{\eta^{(1)}_{\hat{\bf a}_0}(\lambda) - 
\eta^{(2)}_{\hat{\bf a}_0}(\lambda)}{\eta^{(1)}_{\hat{\bf a}_0}(\lambda)\eta^{(2)}_{\hat{\bf a}_0}(\lambda)}
\bigg|\leq |\tilde{\pi}^{16}/5^2|\cdot 
|\xi^{(1)}(\lambda,x)-\xi^{(2)}(\lambda,x))|, \]
and since $|\tilde{\pi}^{16}/5^2|<1$ the proposition is proved for $p=5$ also.
\end{comment}
\end{proof}

By a well-known theorem, Proposition 2.24 implies that $\beta$ has a unique fixed point in $T$.  And 
since $\beta$ is stable on $T'$, that fixed point must lie in $T'$.  This fixed point of $\beta$ is 
related to a certain eigenvector of $\alpha^*$.  Suppose that $\xi(\lambda,x)\in S$ is an 
eigenvector of $\alpha^*$, say,
\[ \alpha^*\big(\xi(\lambda,x)\big) = \kappa\xi(\lambda,x), \]
with $\xi_{\hat{\bf a}_0}(\lambda)$ invertible and $|\xi(\lambda,x)| = |\xi_{\hat{\bf a}_0}(\lambda|$.  
Then $\xi(\lambda,x)/\xi_{\hat{\bf a}_0}(\lambda)\in T$.  
\begin{lemma}
With the above notation, $\xi(\lambda,x)/\xi_{\hat{\bf a}_0}(\lambda)$ is the unique fixed point 
of~$\beta$, hence $\xi(\lambda,x)/\xi_{\hat{\bf a}_0}(\lambda)\in T'$.  In particular, 
$\xi_\rho(\lambda)/\xi_{\hat{\bf a}_0}(\lambda)\in R'$ for all $\rho\in M^\circ$.
\end{lemma}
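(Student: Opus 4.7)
The plan is a direct calculation showing that $\tilde\xi(\lambda,x):=\xi(\lambda,x)/\xi_{\hat{\bf a}_0}(\lambda)$ is a fixed point of $\beta$, followed by an appeal to uniqueness.

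First I would verify that $\tilde\xi\in T$. Since $\xi_{\hat{\bf a}_0}(\lambda)$ is an invertible element of $R$, the quotient makes sense coefficient-by-coefficient and lies in $S$; the coefficient of $(\pi\lambda_0)^{-1}x^{-\hat{\bf a}_0}$ in $\tilde\xi$ is then $1$, and $|\tilde\xi|=|\xi|/|\xi_{\hat{\bf a}_0}|=1$ by the hypothesis $|\xi(\lambda,x)|=|\xi_{\hat{\bf a}_0}(\lambda)|$. Hence $\tilde\xi\in T$.

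Next I would compute $\alpha^*(\tilde\xi)$. The one mildly delicate point is the observation that $\xi_{\hat{\bf a}_0}(\lambda^p)^{-1}$ depends only on $\lambda$, hence factors out of multiplication by $F(\lambda,x)$ and commutes with the truncation operator $\gamma^\circ$. This yields
\[
\alpha^*(\tilde\xi)=\gamma^\circ\bigl(F(\lambda,x)\xi(\lambda^p,x^p)\bigr)/\xi_{\hat{\bf a}_0}(\lambda^p)=\alpha^*(\xi)/\xi_{\hat{\bf a}_0}(\lambda^p)=\kappa\,\xi(\lambda,x)/\xi_{\hat{\bf a}_0}(\lambda^p).
\]
Reading off the coefficient of $(\pi\lambda_0)^{-1}x^{-\hat{\bf a}_0}$ of this expression gives $\eta_{\hat{\bf a}_0}(\lambda)=\kappa\,\xi_{\hat{\bf a}_0}(\lambda)/\xi_{\hat{\bf a}_0}(\lambda^p)$, and then the definition of $\beta$ immediately yields
\[
\beta(\tilde\xi)=\alpha^*(\tilde\xi)/\eta_{\hat{\bf a}_0}(\lambda)=\xi(\lambda,x)/\xi_{\hat{\bf a}_0}(\lambda)=\tilde\xi.
\]
Thus $\tilde\xi$ is a fixed point of $\beta$ in $T$, and by Proposition~2.24 together with the Banach fixed-point theorem it is the unique such fixed point.

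It remains to show $\tilde\xi\in T'$. As observed just before the statement, $\beta(T')\subseteq T'$, and $T'$ is complete, being a closed subset of the complete space~$S'$. The contraction bound in Proposition~2.24 therefore restricts to $\beta|_{T'}$ and produces a unique fixed point of $\beta$ in $T'$; this is \emph{a fortiori} a fixed point of $\beta$ in $T$, so by the uniqueness established above it coincides with $\tilde\xi$. Consequently $\tilde\xi\in T'$, and in particular $\xi_\rho(\lambda)/\xi_{\hat{\bf a}_0}(\lambda)\in R'$ for every $\rho\in M^\circ$.
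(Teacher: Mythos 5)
Your proposal is correct and follows essentially the same route as the paper: compute $\alpha^*\bigl(\xi(\lambda,x)/\xi_{\hat{\bf a}_0}(\lambda)\bigr)=\kappa\,\xi(\lambda,x)/\xi_{\hat{\bf a}_0}(\lambda^p)$ (the scalar $\xi_{\hat{\bf a}_0}(\lambda^p)^{-1}$ factoring through $\gamma^\circ$), read off the coefficient at $x^{-\hat{\bf a}_0}$ to see the quotient is fixed by $\beta$, and then invoke uniqueness from the contraction property together with the stability of $\beta$ on the complete space $T'$, exactly as in the paper's Eq.~(2.26) and the surrounding discussion. You merely make explicit some steps the paper leaves implicit, so no further comment is needed.
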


\begin{proof}
We have
\begin{equation}
\alpha^*\bigg(\frac{\xi(\lambda,x)}{\xi_{\hat{\bf a}_0}(\lambda)}\biggr) = \frac{\alpha^*\big(
\xi(\lambda,x)\big)}{\xi_{\hat{\bf a}_0}(\lambda^p)} = \bigg(\frac{\kappa\xi_{\hat{\bf a}_0}(\lambda)}
{\xi_{\hat{\bf a}_0}(\lambda^p)}\bigg) \frac{\xi(\lambda,x)}{\xi_{\hat{\bf a}_0}(\lambda)}.
\end{equation}
By the definition of $\beta$, this implies the result.
\end{proof}

\begin{corollary}
With the above notation, $\xi_{\hat{\bf a}_0}(\lambda)/\xi_{\hat{\bf a}_0}(\lambda^p) \in R'$.
\end{corollary}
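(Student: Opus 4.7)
The plan is to compare the coefficient of $x^{-\hat{\bf a}_0}$ on the two sides of Equation~(2.28) from the proof of Lemma~2.26, namely
\[ \alpha^*\bigg(\frac{\xi(\lambda,x)}{\xi_{\hat{\bf a}_0}(\lambda)}\bigg) = \bigg(\frac{\kappa\,\xi_{\hat{\bf a}_0}(\lambda)}{\xi_{\hat{\bf a}_0}(\lambda^p)}\bigg)\frac{\xi(\lambda,x)}{\xi_{\hat{\bf a}_0}(\lambda)}. \]
Lemma~2.26 places $\xi(\lambda,x)/\xi_{\hat{\bf a}_0}(\lambda)$ in $T'\subseteq S'$, and Proposition~2.11 tells us that $\alpha^*$ carries $S'$ into itself. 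Hence every coefficient $\eta_\mu(\lambda)$ appearing in the expansion (2.9) of the left-hand side lies in $R'$; in particular, the ``$\hat{\bf a}_0$-coefficient'' $\eta_{\hat{\bf a}_0}(\lambda)$ is an element of $R'$.

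Next I would read off the same coefficient on the right-hand side. Because $\xi(\lambda,x)/\xi_{\hat{\bf a}_0}(\lambda)$ lies in $T$, its $\hat{\bf a}_0$-entry in $R$ equals $1$ by definition of $T$. The factor $\kappa\,\xi_{\hat{\bf a}_0}(\lambda)/\xi_{\hat{\bf a}_0}(\lambda^p)$ is a scalar in $\lambda$ alone, so it multiplies each $x^{-\mu}$-term (with its $(\pi\lambda_0)^{-\mu_0}$ normalization) uniformly. Consequently the $\hat{\bf a}_0$-coefficient of the right-hand side is exactly $\kappa\,\xi_{\hat{\bf a}_0}(\lambda)/\xi_{\hat{\bf a}_0}(\lambda^p)$.

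Equating the two expressions gives
\[ \kappa\,\frac{\xi_{\hat{\bf a}_0}(\lambda)}{\xi_{\hat{\bf a}_0}(\lambda^p)} = \eta_{\hat{\bf a}_0}(\lambda)\in R'. \]
Lemma~2.22 (applied to $\xi(\lambda,x)/\xi_{\hat{\bf a}_0}(\lambda)\in T$) guarantees that $\eta_{\hat{\bf a}_0}(\lambda)$ is invertible of norm $|p|$; in particular it is nonzero, which forces $\kappa\in\Omega^\times$. Since $R'$ is an $\Omega$-subalgebra of $R$, dividing through by the nonzero scalar $\kappa$ keeps us in $R'$, and we conclude $\xi_{\hat{\bf a}_0}(\lambda)/\xi_{\hat{\bf a}_0}(\lambda^p)\in R'$.

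There is no real obstacle here: all the substantive work has already been done in Lemma~2.26 (the fixed point lies in $T'$, not merely in $T$) and in Proposition~2.11 (stability of $S'$ under $\alpha^*$); the corollary is then just a matter of isolating one coefficient. The only point worth double-checking is the bookkeeping with the $(\pi\lambda_0)^{-\mu_0}$ normalization in the definition of $S$, and that comparison is immediate because the multiplier on the right of (2.28) is independent of $x$.
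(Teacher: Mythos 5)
Your proposal is correct and is essentially the paper's own proof: both arguments read off the coefficient of $(\pi\lambda_0)^{-1}x^{-\hat{\bf a}_0}$ in the eigenvector identity (2.26), using the fixed-point lemma (the fixed point lies in $T'$) together with the stability of $S'$ under $\alpha^*$ to conclude that $\kappa\,\xi_{\hat{\bf a}_0}(\lambda)/\xi_{\hat{\bf a}_0}(\lambda^p)\in R'$. Your explicit check that $\kappa\neq 0$ via Lemma 2.22 is a small detail the paper leaves implicit, not a different route.
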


\begin{proof}
Since $\alpha^*$ is stable on $S'$, Lemma 2.25 implies that the right-hand side of~(2.26) lies in 
$S'$.  Since the coefficient of $(\pi\lambda_0)^{-1}x^{-\hat{\bf a}_0}$ on the right-hand side of (2.26) 
is $\kappa\xi_{\hat{\bf a}_0}(\lambda)/\xi_{\hat{\bf a}_0}(\lambda^p)$, the result follows.
\end{proof}

In the next section we find the fixed point of $\beta$ by finding the corresponding eigenvector of 
$\alpha^*$.  This eigenvector 
will be constructed from solutions of the $A$-hypergeometric system; in particular, we shall have 
$\xi_{\hat{\bf a}_0}(\lambda) = \Phi(\lambda)$, so Corollary~2.27 will imply Theorem~1.4.

\section{Fixed point}

We begin with a preliminary calculation.  Consider the series
\[ G(\lambda_0,x) = \sum_{l=0}^{\infty} (-1)^l l!(\pi\lambda_0x^{\hat{\bf a}_0})^{-1-l}. \]
It satisfies the equations
\begin{equation}
\gamma_-\bigg(x_i\frac{\partial}{\partial x_i}-\pi\lambda_0\hat{\bf a}_{0i}x^{\hat{\bf a}_0}\bigg)
G(\lambda_0,x)  = 0
\end{equation}
for $i=0,1,\dots,n$, where $\gamma_-$ is the operator on series defined by
\[\gamma_-\bigg(\sum_{l=-\infty}^{\infty} c_l(\pi\lambda_0x^{\hat{\bf a}_0})^{-1-l}\bigg) = \sum_{l=0}^{\infty} 
c_l(\pi\lambda_0x^{\hat{\bf a}_0})^{-1-l}. \]
It is straightforward to check that the series $\sum_{l=0}^{\infty} c_l(\pi\lambda_0
x^{\hat{\bf a}_0})^{-1-l}$ that satisfy the operators $\gamma_-\circ(x_i\partial/\partial x_i-
\pi\lambda_0\hat{\bf a}_{0i}x^{\hat{\bf a}_0})$ form a one-dimensional space.

Consider the series 
\[ H(\lambda_0,x)=\gamma_-\big(\exp\big(\pi(\lambda_0x^{\hat{\bf a}_0}-\lambda_0^px^{p\hat{\bf a}_0})\big)
G(\lambda_0^p,x^p)\big). \]
Formally one has 
\[ x_i\frac{\partial}{\partial x_i}-\pi\lambda_0\hat{\bf a}_{0i}x^{\hat{\bf a}_0} = \exp(\pi\lambda_0
x^{\hat{\bf a}_0})\circ x_i\frac{\partial}{\partial x_i}\circ \frac{1}{\exp(\pi\lambda_0x^{\hat{\bf a}_0})} \]
and
\[ \gamma_-\circ\bigg(x_i\frac{\partial}{\partial x_i} - \pi\lambda_0\hat{\bf a}_{0i}x^{\hat{\bf a}_0}
\bigg)\circ \gamma_- = 
\gamma_-\circ\bigg(x_i\frac{\partial}{\partial x_i} - \pi\lambda_0\hat{\bf a}_{0i}x^{\hat{\bf a}_0}
\bigg). \]
It follows that
\begin{multline*}
\gamma_-\bigg(x_i\frac{\partial}{\partial x_i} - \pi\lambda_0\hat{\bf a}_{0i}x^{\hat{\bf a}_0}\bigg)
H(\lambda_0,x) = \\
\gamma_-\bigg(\exp(\pi\lambda_0x^{\hat{\bf a}_0})x_i\frac{\partial}{\partial x_i}\bigg(G(\lambda_0^p,x^p)/
\exp(\pi\lambda_0^px^{p\hat{\bf a}_0})\bigg)\bigg). 
\end{multline*}
Eq.~(3.1) implies that this latter expression equals 0, i.~e., $H(\lambda_0,x)$ also satisfies the 
operators $\gamma_-\circ(x_i\partial/\partial x_i-\pi\lambda_0\hat{\bf a}_{0i}x^{\hat{\bf a}_0})$.  Since 
the solutions of this operator form a one-dimensional space, we have $H(\lambda_0,x) = 
cG(\lambda_0,x)$ for some constant $c$.  

We determine the constant $c$ by comparing the coefficients of $(\pi\lambda_0x^{\hat{\bf a}_0})^{-p}$ in 
$G(\lambda_0,x)$ and $H(\lambda_0,x)$.  The coefficient of $(\pi\lambda_0x^{\hat{\bf a}_0})^{-p}$ in 
$G(\lambda_0,x)$ is $(-1)^{p-1}(p-1)!$.
A calculation shows that the coefficient of $(\pi\lambda_0x^{\hat{\bf a}_0})^{-p}$ in $H(\lambda_0,x)$ is
\[ -p\sum_{l=0}^\infty b_{pl}(-1)^ll!\pi^{-l}. \]
By Boyarsky\cite[Eq.~(5.6)]{B} (or see \cite[Lemma~3]{A}) this equals $-p\Gamma_p(p)=(-1)^{p+1} p!$, 
where $\Gamma_p$ denotes the $p$-adic gamma function.  It follows that the constant $c$ satisfies 
the equation
\[ (-1)^{p+1}p!=c(-1)^{p-1}(p-1)!, \]
so $c=p$ and we conclude that
\begin{equation}
H(\lambda_0,x) = p G(\lambda_0,x).
\end{equation}

We now consider the series
\[ \xi(\lambda,x) = \gamma^\circ\bigg(G(\lambda_0,x)\prod_{i=1}^N \exp(\pi\lambda_i
x^{\hat{\bf a}_i})\bigg), \]
where $\gamma^\circ$ is as defined in Section~2.  A calculation shows that
\begin{multline*}
G(\lambda_0,x)\prod_{i=1}^N \exp(\pi\lambda_ix^{\hat{a}_i}) = \\ 
\sum_{\rho\in{\mathbb Z}^{n+1}} \bigg( \sum_{\substack{l_0,\dots,l_N\in{\mathbb Z}_{\geq 0}\\ 
l_1\hat{\bf a}_1+\cdots+l_N\hat{\bf a}_N-(l_0+1)\hat{\bf a}_0 = -\rho}} \frac{(-1)^{l_0}l_0!
\pi^{-1-l_0+\sum_{i=1}^N l_i}
\lambda_1^{l_1}\cdots\lambda_N^{l_N}\lambda_0^{-l_0-1}}{l_1!\cdots l_N!}\bigg)x^{-\rho}. 
\end{multline*}
It follows that we can write $\xi(\lambda,x)$ as
\begin{equation}
\xi(\lambda,x) = \sum_{\rho\in M^\circ}  \xi_\rho(\lambda)(\pi\lambda_0)^{-\rho_0}x^{-\rho},
\end{equation}
where
\begin{multline}
\xi_\rho(\lambda) = \\ 
\sum_{\substack{l_0,\dots,l_N\in{\mathbb Z}_{\geq 0}\\ l_1\hat{\bf a}_1+\cdots+l_N\hat{\bf a}_N-(l_0+1)\hat{\bf a}_0 = -\rho}} 
\frac{(-1)^{\rho_0-1+\sum_{i=1}^N l_i}(\rho_0-1+\sum_{i=1}^N l_i)!}{l_1!\cdots l_N!} \prod_{i=1}^N 
\bigg(\frac{\lambda_i}{\lambda_0}\bigg)^{l_i}.
\end{multline}

{\bf Remark:}  One can check that the series $\lambda_0^{-\rho_0}\xi_\rho(\lambda)$ is a solution of 
the $A$-hyper\-geometric system with parameter $-\rho$ (although we shall not make use of that fact 
here).  Thus the coefficients of powers of $x$ in the series $\xi(\lambda,x)$ form a ($p$-adically 
normalized) family of ``contiguous'' $A$-hypergeometric functions.

The series $\xi_\rho(\lambda)$ have coefficients in ${\mathbb Z}$ for all $\rho\in M^\circ$, hence 
$\xi(\lambda,x)\in S$ and $|\xi(\lambda,x)|\leq 1$.  
Note that $\xi_{\hat{\bf a}_0}(\lambda) = \Phi(\lambda)$, where $\Phi(\lambda)$ is defined by (1.1).  
In particular, $\xi_{\hat{\bf a}_0}(\lambda)$ takes unit values on ${\mathcal D}$, hence 
$|\xi_{\hat{\bf a}_0}(\lambda)|=1$ and $\xi_{\hat{\bf a}_0}(\lambda)$ is an invertible element of $R$.  To 
show that $\xi(\lambda,x)$ satisfies the hypothesis of Lemma~2.25, it remains only to establish the 
following result.

\begin{lemma}
We have $\alpha^*\big(\xi(\lambda,x)\big) = p\xi(\lambda,x)$.
\end{lemma}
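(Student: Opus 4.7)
The plan is to reduce the identity to (3.2) by unwinding both sides and applying the convex geometry of the cone $C$ twice. Writing $\tilde\xi(\lambda,x) = G(\lambda_0,x)\prod_{i=1}^N\exp(\pi\lambda_i x^{\hat{\bf a}_i})$ so that $\xi=\gamma^\circ\tilde\xi$, and $F(\lambda,x)=\prod_{i=0}^N\exp(\pi(\lambda_i x^{\hat{\bf a}_i}-\lambda_i^p x^{p\hat{\bf a}_i}))$, the claim becomes
$$\gamma^\circ\bigl(F(\lambda,x)\,\gamma^\circ\tilde\xi(\lambda^p,x^p)\bigr) = p\,\gamma^\circ\tilde\xi(\lambda,x).$$

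The first step is to drop the inner $\gamma^\circ$. A term $c\,x^{-p\sigma}$ of $\tilde\xi(\lambda^p,x^p)$ contributes to $\gamma^\circ(F\cdot\tilde\xi(\lambda^p,x^p))$ only through a product with some $B_\mu x^\mu$, $\mu\in M$, satisfying $p\sigma-\mu=\rho\in M^\circ$. Since $C$ is a cone, this forces $p\sigma=\mu+\rho\in C$, hence $\sigma\in C\cap\mathbb{Z}A=M$. If moreover $\sigma\in M\setminus M^\circ$, choose a linear functional $h\geq 0$ on $C$ vanishing on the face containing $\sigma$; then $h(p\sigma-\mu)=-h(\mu)\leq 0$, contradicting $\rho\in M^\circ$. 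Hence only $\sigma\in M^\circ$ can contribute, and the inner $\gamma^\circ$ may be removed. The factors for $i\geq 1$ then telescope via $\exp(\pi(\lambda_i x^{\hat{\bf a}_i}-\lambda_i^p x^{p\hat{\bf a}_i}))\exp(\pi\lambda_i^p x^{p\hat{\bf a}_i})=\exp(\pi\lambda_i x^{\hat{\bf a}_i})$, yielding
$$\alpha^*(\xi) = \gamma^\circ\bigl(Q(\lambda_0,x)\textstyle\prod_{i=1}^N\exp(\pi\lambda_i x^{\hat{\bf a}_i})\bigr),\quad Q = \exp(\pi(\lambda_0 x^{\hat{\bf a}_0}-\lambda_0^p x^{p\hat{\bf a}_0}))G(\lambda_0^p,x^p).$$

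The second step uses (3.2): $\gamma_-Q=pG(\lambda_0,x)$. It remains to replace $Q$ by $\gamma_-Q$ inside $\gamma^\circ$. The discarded piece $Q-\gamma_-Q$ involves only nonnegative powers of $x^{\hat{\bf a}_0}$, so after multiplication by $\prod_i\exp(\pi\lambda_i x^{\hat{\bf a}_i})$ every monomial has exponent $m\hat{\bf a}_0+\sum_j l_j\hat{\bf a}_j$ with $m,l_j\geq 0$, whose first coordinate is $\geq 0$. Every $\rho\in M^\circ$ satisfies $\rho_0\geq 1$, the origin being the only element of $M$ with zero first coordinate and it lies on every face. Thus no such monomial has the form $x^{-\rho}$ with $\rho\in M^\circ$, and $\gamma^\circ$ annihilates the discarded part. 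Combining yields $\alpha^*(\xi) = \gamma^\circ(pG(\lambda_0,x)\prod_{i=1}^N\exp(\pi\lambda_i x^{\hat{\bf a}_i})) = p\xi(\lambda,x)$. The principal obstacle is recording cleanly the two convex-geometric interchanges of $\gamma^\circ$ (resp.\ $\gamma_-$) with multiplication; once these are in place, identity (3.2) supplies the factor $p$ and the rest is telescoping of exponentials.
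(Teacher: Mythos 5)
Your proof is correct and takes essentially the same route as the paper's: you drop the inner truncation by the convexity of the cone $C$, telescope the exponential factors, and reduce to identity (3.2), with your second step verifying directly that the part of $Q$ discarded by $\gamma_-$ is annihilated by the outer $\gamma^\circ$ (the point the paper absorbs into identifying the inner $\gamma^\circ$ with $\gamma_-$). In effect you supply the details the paper leaves as ``one checks,'' but the argument is the same.
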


\begin{proof}
From the definitions we have
\[ \alpha^*(\xi(\lambda,x)) = \gamma^\circ\bigg(F(\lambda,x)\cdot\gamma^\circ\bigg(G(\lambda_0^p,x^p)
\prod_{i=1}^N \exp(\pi\lambda_i^px^{p\hat{\bf a}_i})\bigg)\bigg). \]
One checks that $\big(\text{mult.\ by $F(\lambda,x)$}\big)\circ\gamma^\circ = \gamma^\circ\circ 
\big(\text{mult.\ by $F(\lambda,x)$}\big)$.  Furthermore, by definition we have $F(\lambda,x) = 
\prod_{i=0}^N \exp(\pi\lambda_ix^{\hat{\bf a}_i})/\exp(\pi\lambda_i^px^{p\hat{\bf a}_i})$.  It follows that
\[ \alpha^*(\xi(\lambda,x)) = \gamma^\circ\bigg(\gamma^\circ\bigg(\frac{\exp(\pi
\lambda_0x^{\hat{\bf a}_0})}{\exp(\pi\lambda_0^px^{p\hat{\bf a}_0})}G(\lambda_0^p,x^p)\bigg)\prod_{i=1}^N 
\exp(\pi\lambda_ix^{\hat{\bf a}_i})\bigg). \]
By (3.2) we finally have
\[ \alpha^*(\xi(\lambda,x)) = \gamma^\circ\bigg(pG(\lambda_0,x)\prod_{i=1}^N \exp(\pi\lambda_i
x^{\hat{\bf a}_i})\bigg) = p\xi(\lambda,x). \]
\end{proof}

\begin{proof}[Proof of Theorem $1.4$]
We have shown that the series $\xi(\lambda,x)$ given by (3.3) and~(3.4) satisfies the hypotheses of 
Lemma 2.25.  Since $\xi_{\hat{\bf a}_0}(\lambda)=\Phi(\lambda)$, Theorem~1.4 follows from Corollary~2.27.
\end{proof}

\end{document}